\newtheorem{theorem}{Theorem}
\newtheorem{thm}[theorem]{Theorem}
\newtheorem{cor}[theorem]{Corollary}
\newtheorem{lem}[theorem]{Lemma}
\theoremstyle{definition}
\newtheorem{defn}[theorem]{Definition}
\newtheorem{ques}[theorem]{Question}
\newtheorem{rem}[theorem]{Remark}
\newtheorem{ex}[theorem]{Example}
\theoremstyle{remark}
\newcommand{\mbb}{\mathbb}
\newcommand{\QQ}{\mbb{Q}}
\newcommand{\PP}{\mbb{P}}
\newcommand{\mc}{\mathcal}
\newcommand{\mcF}{\mc{F}}
\newcommand{\OO}{\mc{O}}
\newsavebox{\sembox}
\newlength{\semwidth}
\newlength{\boxwidth}
\newsavebox{\semrbox}
\newlength{\semrwidth}
\newlength{\boxrwidth}
\title
{Separable rational connectedness and stability}
\author[Tian]{Zhiyu Tian}
\address{
Department of Mathematics 253-37\\
California Institute of Technology \\
Pasadena, CA, 91125}
\email{tian@caltech.edu}
\date{\today}
\begin{document}


\begin{abstract}
In this short note we prove that in many cases the failure of a variety to be separably rationally connected is caused by the instability of the tangent sheaf (if there are no other obvious reasons). A simple application of the results proves that a smooth Fano complete intersection is separably rationally connected if and only if it is separably uniruled. In particular, a general such Fano complete intersection is separably rationally connected.
\end{abstract}


\maketitle


It has now become clear that the geometry of varieties are in a large part controlled by rational curves. And it is desirable to single out the class of varieties which contains lots of rational curves. In characteristic zero, such class of varieties is rationally connected. 
\begin{defn}
A variety $X$ is rationally connected if there is a family of rational curves
\[
u: \PP^1 \times U \to X,
\]
such that the two point evaluation morphism
\[
u^{(2)}: \PP^1 \times \PP^1 \times U \to X \times X
\]
is dominant.
\end{defn}
Over an uncountable algebraically closed field, this condition is the same as the geometric condition that there is a rational curve through two general points in $X$. Rationally connected varieties in char $0$ have been identified as the correct generalization of rational surfaces to all dimensions \cite{Kollar96}.

However, in positive characteristic, rationally connected varieties are not the correct generalization of rational surfaces since there are inseparable unirational parameterizations of varieties of general type. Instead, one should look at separably rationally connected varieties.
\begin{defn}
A variety $X$ is separably rationally connected if the two point evaluation morphism
\[
u^{(2)}: \PP^1 \times \PP^1 \times U \to X \times X
\]
 is dominant and separable.
\end{defn}
 But then an interesting question arises:
\begin{ques}\label{1}
What makes a variety (not) separably rationally connected?
\end{ques}
In particular, the following is a well-known open question:
\begin{ques}\label{2}
Is every smooth Fano hypersurface separably rationally connected?
\end{ques}

In this short note we try to suggest an answer to Question \ref{1} in some cases, and relate it to Question \ref{2}. The basic observation is the following.
\begin{thm}\label{thm:PicOne}
Let $X$ be a normal projective variety of Picard number one over an algebraically closed field of positive characteristic. Assume that the smooth locus $X^\text{sm}$ of $X$ is separably uniruled. If $X$ is not separably rationally connected, then the tangent sheaf $T_X$ is unstable.
\end{thm}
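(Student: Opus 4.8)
The plan is to exhibit an explicit destabilizing subsheaf of $T_X$ built from the free rational curves, and then to exploit the Picard-number-one hypothesis to convert positivity measured along a curve into positivity of the $H$-slope, where $H$ denotes the ample generator of $N^1(X)_{\RR}$. Recall the numerical dictionary I will use throughout: since $X^{\text{sm}}$ is separably uniruled there is a free rational curve $f\colon \PP^1 \to X^{\text{sm}}$, for which $f^*T_X = \bigoplus_i \OO(a_i)$ with all $a_i \geq 0$ and $\sum_i a_i = -K_X\cdot C \geq 2 > 0$; and $X$ is separably rationally connected precisely when some free curve is \emph{very free}, i.e.\ has all $a_i \geq 1$. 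Write $P_f \subseteq f^*T_X$ for the positive part $\bigoplus_{a_i>0}\OO(a_i)$, a saturated subbundle with $\deg_C P_f = \deg_C f^*T_X$ and $f^*T_X/P_f$ trivial of degree $0$.

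First I would construct the subsheaf. Through a general point $x \in X^{\text{sm}}$, let $F_x \subseteq T_X|_x$ be the linear span of the subspaces $P_f|_x$ as $f$ ranges over free curves with $f(0)=x$. By the usual constructibility and generic-flatness arguments these spans fit together, over a dense open $U \subseteq X^{\text{sm}}$, into a subsheaf of $T_{X^{\text{sm}}}|_U$; I then let $F \subseteq T_X$ be its saturation, a reflexive subsheaf defined on all of the normal variety $X$, of some rank $r$. Because free curves may be chosen through general points and hence avoid the codimension-$\geq 2$ singular locus, both $c_1(F)$ and the restriction of $F$ to a general free curve are well defined.

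The crux, and the step I expect to be the main obstacle, is to show that $r < n$. This is exactly where the hypothesis that $X$ is not separably rationally connected enters, via the smoothing technology of Kollár--Miyaoka--Mori: given finitely many free curves through $x$ whose positive parts span a subspace $V \subseteq T_X|_x$, one attaches them into a comb and smooths it to a single free curve whose positive part at $x$ spans $V$. Consequently, were $F_x = T_X|_x$ for general $x$, one would obtain a free curve whose positive part has full rank $n$ at $x$, which can be promoted to a very free curve, forcing $X$ to be separably rationally connected. Adapting this gluing-and-smoothing argument to positive characteristic in the separable setting is the delicate point; granting it, the hypothesis yields $r = \operatorname{rk} F \leq n-1$.

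Finally I would conclude by a slope comparison. For a general free curve $C=f(\PP^1)$ one has $f^*F \supseteq P_f$ by construction, hence $\deg_C f^*F \geq \deg_C P_f = \deg_C T_X > 0$ and $\deg_C(T_X/F) \leq 0$; combined with $r \leq n-1$ this gives
\[
\mu_C(F) = \frac{\deg_C f^*F}{r} \;\geq\; \frac{\deg_C T_X}{n-1} \;>\; \frac{\deg_C T_X}{n} = \mu_C(T_X).
\]
Since $X$ has Picard number one, $N_1(X)_{\RR} \cong \RR$, so the class $[C]$ and $H^{n-1}$ are proportional by a positive scalar $\lambda$; therefore $\deg_C(\mcE) = \lambda\,\deg_H(\mcE)$ for every sheaf $\mcE$, and the strict inequality of curve-slopes becomes $\mu_H(F) > \mu_H(T_X)$. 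Thus $F$ is a destabilizing subsheaf and $T_X$ is unstable, as claimed.
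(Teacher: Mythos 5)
Your overall strategy coincides with the paper's: you rebuild the sheaf of positive directions (the paper's $\mathcal{D}$, taken from Shen's work), bound its rank using the non-SRC hypothesis, and use Picard number one to convert degrees along curves into $H$-slopes. However, the decisive step --- the destabilizing slope inequality --- rests on a false implication. You claim that $P_f \subseteq f^*F$ ``hence'' $\deg_C f^*F \geq \deg_C P_f$. Containment of sheaves of different ranks on a curve gives no such lower bound (consider $\OO(5) \subset \OO(5)\oplus\OO(-3)$), and in your situation the formal consequence actually goes the other way: the quotient $f^*F/P_f$ embeds into $f^*T_X/P_f$, which is a trivial bundle, and every subsheaf of a trivial bundle on $\PP^1$ has degree $\leq 0$, so $\deg_C f^*F \leq \deg_C P_f$. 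Equivalently, $(T_X/F)|_C$ is a quotient of a trivial bundle, hence $\deg_C(T_X/F) \geq 0$, not $\leq 0$ as you assert. So as written the slope comparison is unproved, and no general free curve will fix it: on a non-maximally free curve the inequality you want can genuinely fail to be useful, since $F|_C$ may pick up directions of negative degree inside the trivial quotient.

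What rescues the argument --- and this is precisely the input the paper takes from \cite{ShenFRC} --- is to restrict to a \emph{maximally} free curve $\phi$ and to know that $\text{rank}\, F$ equals the positive rank $R$, i.e.\ the number of positive summands of $\phi^*T_X$. Both facts come from the comb-smoothing you correctly flag as needed for $r<n$: the span of the positive parts of finitely many free curves through a general point lies in the positive part of a single free curve, so $\dim F_x \leq R$, with equality exactly when $\phi$ is maximally free. Granting this, on a maximally free curve the saturation of $F|_C$ contains $P_\phi$ and has the same rank $R$, hence equals $P_\phi$; therefore $\deg_C F = \deg_C P_\phi = \deg_C T_X$, and Picard number one upgrades this to the equality $c_1(F)\cdot H^{n-1} = c_1(T_X)\cdot H^{n-1} > 0$, after which instability follows from $R < n$. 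In short, the inequality you need is in fact an equality, it holds only on maximally free curves, and it requires Shen's result (Propositions 2.2 and 2.5 and the discussion after Corollary 3.2 of \cite{ShenFRC}, as cited in the paper) rather than the formal containment you invoke; your identification of $r<n$ as the sole crux misplaces where the real difficulty lies.
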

For the ease of the reader, we recall the definition of (separable) uniruledness.
\begin{defn}
A variety $X$ is (separably) uniruled if there is a family of rational curves
\[
u: \PP^1 \times U \to X,
\]
such that the morphism $u$ 
is dominant (and separable), and non-constant along the $\PP^1$ factor.
\end{defn}

\begin{rem}
If $X$ is $\QQ$-factorial, then one can prove that the smooth locus $X^\text{sm}$ is rationally connected and two general points can be connected by a free rational curve. That is, $X$ is freely rationally connected (FRC) as in \cite{ShenFRC}, Definition 1.2.
\end{rem}
\begin{proof}[Proof of Theorem \ref{thm:PicOne}]
Since the smooth locus $X^{\text{sm}}$ is separably uniruled, there is a free curve, i.e. a morphism $f:\PP^1 \to X^\text{sm}$ such that 
\[
f^*T_X\cong \OO(a_1)\oplus \ldots \oplus \OO(a_r) \oplus \OO(a_{r+1}) \oplus \OO(a_n), 
\]
\[
a_1 \geq a_2 \geq \ldots a_r>a_{r+1}=\ldots=a_n=0, n=\dim X.
\]

Define the positive rank $R$ of $X$ to be the maximum of such numbers $r$. A free curve is called \emph{maximally free} if the pull-back of the tangent bundle has $R$ positive summands.

Given a general point $x \in X$, by \cite{ShenFRC}, Proposition 2.2, there is a well-defined subspace $D(x) \subset T_X|_x$, as the subspace of the positive directions of a maximally free curve at $x$ (i.e. $D(x)$ is independent of the choice of the maximally free curve). Furthermore,  over an open subset $U$ of $X$, the subspaces of $D(x)$ glues together to a (locally free) coherent subsheaf of $T_X$ (loc. cit. Proposition 2.5). Denote by $\mathcal{D}$ the saturated subsheaf of $T_X$ which extends the locally free subsheaf given by $D(x), x \in U$. Obviously the rank of $\mathcal{D}$ is $R$.

Let $\phi: \PP^1 \to X^{\text{sm}}$ be a maximally free curve. Then we have
\[
\phi^* {T_X} \cong \OO(a_1)\oplus \ldots \oplus \OO(a_R) \oplus \OO(a_{R+1}) \oplus\ldots \oplus \OO(a_n), 
\]
\[
\phi^* \mathcal{D} \cong \OO(a_1)\oplus \ldots \oplus \OO(a_R),
a_1 \geq a_2 \geq \ldots a_R>a_{R+1}=\ldots=a_n=0,
\]
(c.f. the paragraph after Corollary 3.2, loc. cit.)

Thus we have the equality between the first Chern classes $c_1(\mathcal{D})=c_1(T_X)$. Here we use the fact that $X$ is Picard number one and we can prove the equality by taking intersection numbers with a maximally free curve.

So if $R<n$, or equivalently, $X^{\text{sm}}$ is not separably rationally connected, then the tangent sheaf is unstable in the sense of Mumford. Indeed, we have 
\[
\frac{c_1(\mathcal{D}) \cdot H^{n-1}}{\text{rank} \mathcal{D}}>\frac{c_1(T_X)\cdot H^{n-1}}{n}>0
\]
for some ample divisor $H$.
\end{proof}

\begin{rem}
Koll\'ar constructed examples of degree $p$ branched covers of $\PP^n$ in characteristic $p$ which are separably uniruled, rationally connected, but not separably rationally connected (Exercise 5.19, Chap. V, \cite{Kollar96}). In his examples, the sheaf of differentials $\Omega_X$ has an unexpected quotient sheaf, whose dual is basically the sheaf $\mathcal{D}$ in the proof.
\end{rem}

One application of Theorem \ref{thm:PicOne} is the following. 
\begin{cor}
Consider the following three properties of a smooth Fano complete intersection $X$ of dimension at least $3$:
\begin{enumerate}
\item
$X$ is separably uniruled.
\item $X$ is separably rationally connected.
\item $X$ is rationally connected.
\end{enumerate}
Then the first two properties are equivalent to each other and imply the last one. 
\end{cor}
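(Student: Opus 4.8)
The plan is to deduce the Corollary from Theorem~\ref{thm:PicOne} by verifying its hypotheses for a smooth Fano complete intersection $X$ of dimension at least $3$, and then handling the implications. The implication $(2)\Rightarrow(1)$ is immediate from the definitions, since separable rational connectedness is a strictly stronger condition than separable uniruledness. Likewise $(2)\Rightarrow(3)$ is trivial, because a dominant separable evaluation map is in particular dominant, so every separably rationally connected variety is rationally connected. Thus the entire content of the Corollary is the reverse implication $(1)\Rightarrow(2)$: a separably uniruled smooth Fano complete intersection is separably rationally connected.

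\medskip\noindent
To prove $(1)\Rightarrow(2)$ I would apply Theorem~\ref{thm:PicOne} in its contrapositive form. A smooth complete intersection $X$ of dimension at least $3$ in $\PP^N$ has Picard number one by the Lefschetz hyperplane theorem, and it is smooth (hence normal and equal to its own smooth locus). So the two standing hypotheses of Theorem~\ref{thm:PicOne} are met once we know $X$ is separably uniruled --- which is precisely assumption $(1)$. Theorem~\ref{thm:PicOne} then says that if $X$ were \emph{not} separably rationally connected, its tangent sheaf $T_X$ would have to be unstable. Hence it suffices to show that $T_X$ is semistable (in the sense of Mumford, with respect to the hyperplane class $H$), and this contradiction will force $X$ to be separably rationally connected.

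\medskip\noindent
The main obstacle, and the real mathematical content, is therefore the stability statement: the tangent sheaf of a smooth Fano complete intersection of dimension $\ge 3$ is Mumford-(semi)stable with respect to the polarization $H = \OO_X(1)$. The natural approach is to use the restriction of the tangent sequence. Since $X$ is Fano, $T_X$ is a sheaf of positive slope, and one wants to rule out any saturated destabilizing subsheaf. I would exploit the Euler sequence for $\PP^N$ restricted to $X$ together with the normal bundle sequence
\[
0 \to T_X \to T_{\PP^N}|_X \to \bigoplus_i \OO_X(d_i) \to 0,
\]
and invoke the known (semi)stability of $T_{\PP^N}$ to control destabilizing subsheaves of $T_X$; restriction theorems of Mehta--Ramanathan type, valid in positive characteristic, let one reduce statements about stability with respect to $H$ to the behavior on general complete-intersection curves where the splitting type is computable. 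The delicate point in positive characteristic is that $T_{\PP^N}$ and restrictions of stable bundles may fail to be strongly semistable, so one must argue with ordinary Mumford semistability and be careful that Frobenius pullbacks are not needed: for the contradiction we only need semistability, i.e. the absence of a subsheaf of strictly larger slope, and Theorem~\ref{thm:PicOne} produces exactly such a strictly destabilizing subsheaf $\mathcal{D}$ when separable rational connectedness fails. Once semistability of $T_X$ is established, the quoted inequality in the proof of Theorem~\ref{thm:PicOne} cannot hold, so $R=n$ and $X$ is separably rationally connected, completing the argument.
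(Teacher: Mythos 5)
Your overall skeleton agrees with the paper's: the implications $(2)\Rightarrow(1)$ and $(2)\Rightarrow(3)$ are indeed immediate, Picard number one comes from Grothendieck--Lefschetz (this needs $\dim X\geq 3$), and the contrapositive of Theorem~\ref{thm:PicOne} reduces $(1)\Rightarrow(2)$ to (semi)stability of the tangent sheaf; you are also right that semistability suffices, since the sheaf $\mathcal{D}$ produced in the proof of Theorem~\ref{thm:PicOne} has $c_1(\mathcal{D})=c_1(T_X)$ and rank $R<n$, hence \emph{strictly} destabilizes. The genuine gap is in the one step carrying all the mathematical weight: your proposed proof of semistability of $T_X$ does not work. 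Any subsheaf $\mcF\subset T_X$ maps to zero under $T_{\PP^N}|_X\to\bigoplus_i\OO_X(d_i)$, so the only information semistability of $T_{\PP^N}|_X$ could give is $\mu(\mcF)\leq\mu(T_{\PP^N}|_X)=\deg X\cdot\frac{N+1}{N}$. But $\mu(T_X)=\deg X\cdot\frac{N+1-\sum_i d_i}{n}\leq \deg X$ is \emph{strictly smaller} than $\mu(T_{\PP^N}|_X)$ once some $d_i\geq 2$, so this bound is perfectly compatible with $\mu(\mcF)>\mu(T_X)$ and yields no contradiction ($T_X$ itself is a subsheaf of $T_{\PP^N}|_X$ of strictly smaller slope). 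Even the special sheaf $\mathcal{D}$ is not excluded: for a hypersurface of degree $d$, ambient semistability only forces $R\geq N(N+1-d)/(N+1)$, which for $d\geq 3$ is weaker than $R\leq n-1$. Two further problems: semistability of $T_{\PP^N}|_X$ for the \emph{given} $X$ is not known a priori --- Mehta--Ramanathan applies to general complete intersections of sufficiently high degree, not to a fixed low-degree one (indeed $T_{\PP^N}|_\ell\cong\OO(2)\oplus\OO(1)^{N-1}$ on a line is unstable) --- and your curve-restriction reduction runs in the wrong direction: to prove semistability of $T_X$ by restricting to a general complete intersection curve $C\subset X$ you must first prove $T_X|_C$ semistable, and on such a high-genus curve there is no ``computable splitting type''; that is precisely the original difficulty.

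The paper fills this step by a different mechanism, which you would need to adopt or replace. It proves stability of $\Omega_X$ (equivalent to stability of $T_X$) from the vanishing $H^p(X,\Omega_X^q(t))=0$ for $p+q<\dim X$, $t<q-p$ (Lemma~\ref{lem:vanishing}, proved by induction via the Euler sequence on $\PP^n$ and the conormal/restriction sequences for hypersurfaces): given a reflexive rank-$r$ subsheaf $\mcF\subset\Omega_X$, Picard number one gives $\det\mcF\cong\OO_X(k)$, the induced nonzero section of $\Omega_X^r(-k)$ forces $-k\geq r$ by the vanishing, and hence $\mu(\mcF)\leq-\deg X<\mu(\Omega_X)$, where the last inequality uses $\sum_i d_i-c-1>0$. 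This inequality fails exactly when $X$ is a linear subspace or a quadric, which is why the paper disposes of those cases separately as trivially separably rationally connected; your sketch never addresses these exceptional cases, for which your claimed semistability argument would also have to say something.
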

\begin{proof}
If $X$ is a linear subspace or a smooth quadric hypersurface, the statement is trivial.

In the following we assume $X$ is neither a linear subspace nor a quadric hypersurface. Then we use the following to show that the sheaf of differentials $\Omega_X$ is stable for such complete intersections. A proof of the stability over a field of characteristic $0$ can be found in \cite{StabilityTangent}. The proof only uses the fact that $H^0(X, \Omega_X^q(q-1))=0$ and the fact that $X$ has Picard number one. All these facts remain true in positive characteristic. The general vanishing result is proved in Lemma 3.3, \cite{Olivier}. But below we include a proof for completeness, which is essentially the same as the one in \cite{Olivier}.

The vanishing result we will need is the following.
\begin{lem}[\cite{Olivier}]\label{lem:vanishing}
Let $X \subset \PP^n$ be a smooth complete intersection of degree $(d_1, \ldots, d_c)$. Then we have $H^p(X, \Omega_X^q(t))=0$, for all $p+q < \dim X=n-c, t<q-p$.
\end{lem}

Assuming this lemma, the proof proceeds as in \cite{StabilityTangent}. For any subsheaf $\mcF \subset \Omega_X$, we may assume $\mcF$ is reflexive of rank $r < \dim X$. Thus $\det \mcF$ is an invertible subsheaf of $\Omega_X^r$. Since $X$ has Picard number one by the Grothendieck-Lefschetz Theorem (Corollary 3.2, Chap. IV, \cite{AmpSubVar}), we know $\det \mcF$ is isomorphic to $\OO_X(k)$ for some $k$. Then $H^0(X, \Omega_X^r(-k))=0$. So by Lemma \ref{lem:vanishing}, $-k\geq r$. Then
\[
\mu(\mcF)=\frac{\det \mcF \cdot \OO(1)^{\dim X-1}}{r}\leq -\deg X < \mu(\Omega_X)=\deg X \cdot \frac{\sum_1^c d_i-n-1}{\dim X}.
\]
Here in the last inequality we use the assumption that $\sum d_i-1-c>0$ (note that the proof of Corollary 0.3 in \cite{StabilityTangent} wrongly assumes this without the restriction on the complete intersection not being a linear subspace or a hyperquadric).
\end{proof}

Now we use inductions to prove Lemma \ref{lem:vanishing}, which is divided into the following two lemmas.

\begin{lem}
Let $X\cong \PP^n$. Then $H^p(X, \Omega_X^q(t))=0$, for all $p+q < \dim X=n, t<q-p$.
\end{lem}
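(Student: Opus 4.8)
The plan is to prove the vanishing by induction on the exterior power $q$, using the Euler sequence on projective space together with the standard computation of the cohomology of line bundles on $\PP^n$. Recall the Euler sequence
\[
0 \to \Omega_X^1 \to \OO_X(-1)^{\oplus(n+1)} \to \OO_X \to 0.
\]
Taking the $q$-th exterior power of this short exact sequence (using that the quotient is a line bundle) yields, for each $q \geq 1$, the exact sequence
\[
0 \to \Omega_X^q \to \OO_X(-q)^{\oplus \binom{n+1}{q}} \to \Omega_X^{q-1} \to 0.
\]
Twisting by $\OO_X(t)$ and passing to the long exact sequence in cohomology gives, for every $p$, the three-term portion
\[
H^{p-1}(X, \Omega_X^{q-1}(t)) \to H^p(X, \Omega_X^q(t)) \to H^p(X, \OO_X(t-q))^{\oplus \binom{n+1}{q}}.
\]

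First I would dispose of the base case $q = 0$: here $\Omega_X^0 = \OO_X$, and the hypotheses become $p < n$ and $t < -p$. For $p = 0$ the condition $t < 0$ forces $H^0(X, \OO_X(t)) = 0$, while for $0 < p < n$ the group $H^p(X, \OO_X(t))$ vanishes for all $t$; the case $p = n$ is excluded since $p + q < n$ already gives $p < n$.

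For the inductive step, I would fix $q \geq 1$, assume the statement for $q - 1$, and examine the two outer terms of the displayed sequence. The rightmost term involves $H^p(X, \OO_X(t-q))$: since $p + q < n$ forces $p < n$, this group can only be nonzero when $p = 0$, and then the hypothesis $t < q - p = q$ gives $t - q < 0$, so it vanishes in every case. The leftmost term $H^{p-1}(X, \Omega_X^{q-1}(t))$ is covered by the inductive hypothesis, provided the shifted indices still satisfy the numerical conditions; indeed $(p-1) + (q-1) = p + q - 2 < n$ and $t < q - p = (q-1) - (p-1)$, so the hypothesis applies and this term vanishes as well (it is trivially zero when $p = 0$). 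Squeezing $H^p(X, \Omega_X^q(t))$ between two vanishing groups completes the induction.

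The only delicate point is the bookkeeping of the two inequalities $p + q < n$ and $t < q - p$ under the index shift $(p, q) \mapsto (p-1, q-1)$ and under the passage to the line bundle $\OO_X(t-q)$; the content of the argument is precisely that both inequalities are preserved, so that the Euler sequence closes the induction. I expect no serious obstacle beyond verifying that $p + q < n$ always keeps the relevant line-bundle cohomology out of the top degree $n$, which is exactly what makes the middle term of the cohomology sequence computable.
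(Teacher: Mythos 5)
Your proof is correct and takes essentially the same route as the paper's: induction on $q$ using the exterior powers of the Euler sequence and squeezing $H^p(X,\Omega_X^q(t))$ between the inductively handled term $H^{p-1}(X,\Omega_X^{q-1}(t))$ and the line-bundle cohomology $H^p(X,\OO_X(t-q))$. If anything, you supply more detail than the paper (the $q=0$ base case and the precise reason the middle line-bundle term vanishes when $p<n$), so there is nothing to fix.
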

\begin{proof}
We use induction on $q$. The statement is true if $q=0$. We have the Euler sequence
\[
0 \to  \Omega_X \to V \otimes \OO(-1) \to \OO \to 0.
\]
Taking exterior powers gives
\[
0 \to \Omega_X^q \to \wedge^q V \otimes \OO(-q) \to \Omega_X^{q-1} \to 0,
\]
which gives the following exact sequence of cohomology groups:
\[
H^{p-1}(X, \Omega_X^{q-1}(t)) \to H^p(X, \Omega_X^q(t)) \to H^p(X, \wedge^q V \otimes \OO(-q)).
\]
By assumption on $p, q$, $H^p(X, \wedge^q V \otimes \OO(t-q))=0$. And $H^{p-1}(X, \Omega_X^{q-1}(t))=0$ by the induction hypothesis since $(p-1)+(q-1) < \dim X, t < (q-1)-(p-1)$. The statement follows.
\end{proof}
\begin{lem}
Let $Y \subset (X, \OO_X(1))$ be a smooth hypersurface of degree $d \geq 2$. And assume that $H^p(X, \Omega_X^q(t))=0$, for all $p+q < \dim X, t<q-p$. Then $H^p(Y, \Omega_Y^q(t))=0$, for all $p+q < \dim Y, t<q-p$.
\end{lem}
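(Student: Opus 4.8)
The approach is to derive the vanishing on $Y$ from the assumed vanishing on $X$ by an induction on $q$, chasing two short exact sequences. The first is the restriction sequence, obtained by tensoring $0\to\OO_X(-d)\to\OO_X\to\OO_Y\to 0$ with the locally free sheaf $\Omega_X^q(t)$:
\[
0\to \Omega_X^q(t-d)\to \Omega_X^q(t)\to \Omega_X^q|_Y(t)\to 0.
\]
The second comes from the conormal sequence $0\to\OO_Y(-d)\to\Omega_X|_Y\to\Omega_Y\to 0$; taking its $q$-th exterior power (the sub being a line bundle) and twisting by $\OO_Y(t)$ yields
\[
0\to \Omega_Y^{q-1}(t-d)\to \Omega_X^q|_Y(t)\to \Omega_Y^q(t)\to 0.
\]

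First I would dispose of the base case $q=0$ using only the restriction sequence: in its long exact sequence $H^p(Y,\OO_Y(t))$ sits between $H^p(X,\OO_X(t))$ and $H^{p+1}(X,\OO_X(t-d))$, both of which lie in the vanishing range for $X$ once $p<\dim Y$ and $t<-p$ (the only nontrivial check, on the twisted-down term, being $t<d-p-1$, which follows from $t<-p$ and $d\ge 2$).

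For $q\ge 1$ the step splits in two. In Step A, I would use the restriction sequence to show the intermediate vanishing $H^p(Y,\Omega_X^q|_Y(t))=0$ whenever $p+q<\dim Y$ and $t<q-p$: the flanking groups $H^p(X,\Omega_X^q(t))$ and $H^{p+1}(X,\Omega_X^q(t-d))$ both vanish by hypothesis, since $p+q<\dim Y<\dim X$ and, for the down-twisted term, $(p+1)+q<\dim X$ together with $t-d<q-p-1$ (the latter from $t<q-p$ and $d\ge2$). In Step B, I would feed this into the conormal exterior-power sequence: there $H^p(Y,\Omega_Y^q(t))$ is squeezed between $H^p(Y,\Omega_X^q|_Y(t))$, just shown to vanish, and $H^{p+1}(Y,\Omega_Y^{q-1}(t-d))$, which vanishes by the induction hypothesis on $Y$ at level $q-1$ because $(p+1)+(q-1)=p+q<\dim Y$ and $t-d<(q-1)-(p+1)$.

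The calculation is routine once the sequences are in place; the one point that must be verified carefully is that each numerical condition $p+q<\dim(\cdot)$ and $t<q-p$ is preserved under the twist by $-d$ and the shift $(p,q)\mapsto(p+1,q-1)$ or $(p+1,q)$. This is exactly where $d\ge 2$ is needed: it is what forces the down-twisted groups to remain in the vanishing range, the binding inequality $t-d<(q-1)-(p+1)$, i.e.\ $t<q-p-2+d$, becoming the sharp $t<q-p$ precisely when $d=2$. I expect no conceptual obstacle here — the content of the lemma is that the whole statement is an exact-sequence consequence of the known vanishing on projective space, which is why it survives in positive characteristic, where Kodaira--Akizuki--Nakano vanishing would not be available.
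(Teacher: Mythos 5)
Your proposal is correct and is essentially identical to the paper's own argument: the same induction on $q$, the same two exact sequences (restriction of $\Omega_X^q$ to $Y$ and the exterior power of the conormal sequence), and the same numerical checks where $d\geq 2$ supplies the needed slack in the twist. The only difference is cosmetic — you spell out the $q=0$ base case, which the paper dismisses as easy.
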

\begin{proof}
We use induction on $q$. The $q=0$ case is easy.

We have short exact sequences:
\[
0 \to \OO_Y(-d) \to \Omega_X|_Y \to \Omega_Y \to 0,
\]
\[
0 \to \Omega_X^q(-d) \to \Omega_X^q \to \Omega_X^q|_Y \to 0,
\]
\[
0 \to \Omega^{q-1}_Y(-d) \to \Omega_X^q|_Y \to \Omega_Y^q \to 0.
\]
So we have exact sequence of cohomology groups:
\[
H^p(Y, \Omega_X^q(t)|_Y) \to H^p(Y, \Omega_Y^q(t)) \to H^{p+1}(Y, \Omega_Y^{q-1}(t-d)),
\]
\[
H^p(X, \Omega_X^q(t)) \to H^p(Y, \Omega_X^q(t)|_Y) \to H^{p+1}(X, \Omega_X^q(t-d)).
\]
The assumptions on $p, q, t, d$ implies that 
\[
(p+1)+q < \dim X, 
\]
\[
t-d \leq t-2<q-p-2=(q-1)-(p+1)<q-(p+1).
\]
So 
\[
H^p(Y, \Omega_X^q(t)|_Y)=H^{p+1}(Y, \Omega_Y^{q-1}(t-d))=H^p(X, \Omega_X^q(t))= H^{p+1}(X, \Omega_X^q(t-d))=0,
\]
by the induction hypothesis and the assumption on the cohomology groups of $X$.
 Note the $d\geq 2$ assumption is crucial to get the vanishing of $H^{p+1}(Y, \Omega_Y^{q-1}(t-d))$.
\end{proof}
As a further corollary, we give a different proof of the following result of Chen-Zhu.
\begin{cor}[\cite{ChenZhu}]
A general Fano complete intersection of dimension at least $3$ is separably rationally connected. 
\end{cor}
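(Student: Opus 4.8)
The plan is to deduce the statement from the preceding Corollary, thereby replacing the hard property of separable rational connectedness by the much softer property of separable uniruledness. By Bertini's theorem a general complete intersection $X$ of type $(d_1,\dots,d_c)$ in $\PP^n$ with $\sum_i d_i\le n$ and $n-c\ge 3$ is smooth, Fano, and of dimension at least $3$, so it satisfies the hypotheses of the Corollary; in particular properties (1) and (2) there are equivalent for such $X$. Hence it is enough to prove that a general Fano complete intersection is separably uniruled, since the Corollary then promotes this to separable rational connectedness. Linear spaces and quadrics are rational and may be discarded at the outset.

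To establish separable uniruledness I would exhibit a single free rational curve $f:\PP^1\to X$, that is, one with $f^*T_X$ globally generated. On $\PP^1$ global generation is the same as nefness (all summands of $f^*T_X$ nonnegative), which forces $H^1(\PP^1,f^*T_X)=0$; the space of deformations of $f$ is then smooth at $[f]$ and the evaluation morphism is smooth there, producing a dominant separable family of rational curves and hence separable uniruledness. When the Fano index is at least two, i.e.\ $\sum_i d_i\le n-1$, I would take $f$ to be a general line $\ell\subset X$. From the sequence $0\to T_\ell\to T_X|_\ell\to N_{\ell/X}\to 0$ with $T_\ell\cong\OO(2)$ one finds $\deg N_{\ell/X}=n-1-\sum_i d_i\ge 0$, and since an extension of a globally generated bundle by $\OO(2)$ on $\PP^1$ is again globally generated, the line is free as soon as $N_{\ell/X}$ is nef. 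A dimension count on the Fano scheme of lines, together with a genericity argument, shows that for a general $X$ the general line indeed has balanced, hence nef, normal bundle.

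The genuinely delicate case is the index-one case $\sum_i d_i=n$. Here $\deg N_{\ell/X}=-1$, so no line is free and lines do not even pass through a general point; I would instead use a conic $C$, for which $-K_X\cdot C=2$. Thus $f^*T_X$ has degree $2$ and rank $n-c$, and the inclusion $T_{\PP^1}\cong\OO(2)\hookrightarrow f^*T_X$ coming from the parametrization forces the largest summand to be at least $\OO(2)$; the goal is to prove that a general conic on a general $X$ satisfies $f^*T_X\cong\OO(2)\oplus\OO^{n-c-1}$, which is globally generated and therefore free. The main obstacle is exactly this splitting computation in positive characteristic. Unlike in characteristic zero one cannot invoke generic smoothness, so one must verify, characteristic-freely, that the derivative (Gauss) map governing the splitting type of $f^*T_X$ along a general conic is as nondegenerate as possible, so that no negative summand can occur; the Euler-relation degeneracies when $p$ divides a degree $d_i$ are the kind of phenomenon one must rule out. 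Once a free line (index $\ge 2$) or a free conic (index $1$) is produced, separable uniruledness follows immediately, and the Corollary upgrades it to separable rational connectedness. I should stress that the entire economy of this argument over that of Chen--Zhu comes from the Corollary, which lets one construct merely \emph{free} curves in place of \emph{very free} ones.
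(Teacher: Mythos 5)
Your high-level route is exactly the paper's: invoke the preceding Corollary to reduce separable rational connectedness of a general Fano complete intersection to the much weaker statement of separable uniruledness, and then try to produce a single free line (index $\geq 2$) or free conic (index $1$). The problem is that your proposal does not actually carry out this last, crucial step, and the method you suggest for it would fail in positive characteristic. For lines, your ``dimension count on the Fano scheme of lines, together with a genericity argument'' is the characteristic-zero argument: to pass from ``lines cover $X$'' to ``a line through a general point is free'' one needs generic smoothness of the evaluation map of the universal family of lines, which is precisely what is unavailable in characteristic $p$ (the family may dominate $X$ inseparably, in which case no line need be free). Nor does good behavior of the Fano scheme rescue this: if $F(X)$ is smooth of the expected dimension at $[\ell]$, one only gets $H^1(\ell, N_{\ell/X})=0$, i.e.\ all summands of $N_{\ell/X}$ are $\geq -1$; a splitting such as $\OO(1)\oplus\OO(1)\oplus\OO(-1)$ is perfectly compatible with this and is not nef. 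For conics, you candidly label the splitting computation ``the main obstacle'' and leave it unresolved, so in the index-one case the proposal proves nothing.

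The missing idea --- which is how the paper (via Exercise 4.4, Chap.\ V of \cite{Kollar96} for hypersurfaces and Proposition 2.13 of \cite{DebarreBook} for index $\geq 2$) handles it --- is to exploit that freeness is an \emph{open} condition on the incidence variety of pairs (complete intersection, rational curve on it). It therefore suffices to write down, in each characteristic, one explicit complete intersection of the given multidegree containing one explicit line or conic whose normal bundle can be checked to be nef by direct computation; openness then transfers the conclusion to the general $X$ of that multidegree. This replaces the genericity argument you propose (invalid in characteristic $p$) and the nondegeneracy verification you defer (never supplied) by a finite, concrete calculation, and it is the reason the paper can dismiss the remaining index-one, codimension $\geq 2$ case with the single remark that it ``can be proved in the same way, i.e.\ by writing down an explicit complete intersection which contains a free curve.'' Your closing observation --- that the whole gain over Chen--Zhu is needing only free rather than very free curves, with degree bounded by $2$ instead of growing with $\dim X$ --- does match the paper's concluding remark.
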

\begin{proof}[Sketch of proof]
It suffices to prove separable uniruledness of a general such complete intersection, which is Exercise 4.4, Chap. V, \cite{Kollar96} for hypersurface, and Proposition 2.13 in Debarre's book \cite{DebarreBook} for complete intersections of index at least $2$. The remaining case can be proved in the same way, i.e. by writing down an explicit complete intersection which contains a free curve. 

The key point that make separable uniruledness much easier to prove than the separable rational connectedness case is that we only need to work with lines and conics for separable uniruledness, while for separable rational connectedness, the degree of rational curves grows like the dimension of the variety.
\end{proof}

In the higher Picard number case, the tangent sheaf may fail to be semi-stable for many reasons, for example, if the variety has a fibration structure. From a more positive perspective, we would like to prove that if the tangent sheaf is semistable and if there are no other obvious reasons for the variety to be not separably rationally connected, then the variety is separably rationally connected.

However, it is not clear what should the term ``obvious reasons" mean. Below we suggest one possibility.

It is very easy to show that on a smooth projective separably rationally connected variety $X$, the group of rational one cycles modulo numerical equivalence $N_1(X)_\QQ$ is generated by (very) free rational curves. Thus if this group is not generated by free rational curves, then the variety $X$ is obviously not separably rationally connected. Then essentially the same proof as in \ref{thm:PicOne} gives the following.
\begin{thm}\label{thm:general1}
Let $X$ be a smooth Fano variety over an algebraically closed field of positive characteristic. Assume that $X$ is separably uniruled and the group of rational one cycles modulo numerical equivalence $N_1(X)_\QQ$ is generated by free rational curves. If the tangent sheaf of $X$ is semi-stable, then $X$ is separably rationally connected.
\end{thm}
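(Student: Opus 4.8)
The plan is to argue by contradiction, running the construction from the proof of Theorem \ref{thm:PicOne} and then promoting the Chern class identity $c_1(\mcD)=c_1(T_X)$ from the Picard number one setting to the present one using the hypothesis on $N_1(X)_\QQ$. So suppose $X$ is not separably rationally connected. Since $X$ is smooth and separably uniruled, the construction recalled in Theorem \ref{thm:PicOne} (via \cite{ShenFRC}, Propositions 2.2 and 2.5) produces the positive rank $R$, the maximally free curves, and the saturated subsheaf $\mcD\subset T_X$ of rank $R$ with $Q:=T_X/\mcD$ torsion free. Because $X$ admits no very free curve, $R<n=\dim X$; this is the inequality that will ultimately violate semistability.

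The heart of the matter is to show $c_1(\mcD)=c_1(T_X)$ in $N^1(X)_\QQ$, equivalently $c_1(Q)=0$ numerically. By the perfect pairing between $N^1(X)_\QQ$ and $N_1(X)_\QQ$ and the hypothesis that $N_1(X)_\QQ$ is generated by classes of free rational curves, it suffices to prove $c_1(Q)\cdot[C]=0$ for every free rational curve $C$. Fixing such a $C$ and replacing it by a general deformation (again free, of the same numerical class, and avoiding the loci where $\mcD$ and $Q$ fail to be locally free), let $\phi:\p\to X$ parametrize it. I would then establish $\deg\phi^*Q\ge 0$ and $\deg\phi^*Q\le 0$. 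The first is immediate: $\phi^*T_X$ is globally generated, hence nef, and $\phi^*Q$ is a quotient of it, hence nef of nonnegative degree. For the second I would use that the positive directions of $\phi$ at a general point lie in $\mcD$, so that the composite $P\hookrightarrow\phi^*T_X\to\phi^*Q$ vanishes, where $P\subset\phi^*T_X$ is the subbundle spanned by the summands of positive degree; this forces $P\subset\phi^*\mcD$, hence $\deg\phi^*\mcD\ge\deg P=\deg\phi^*T_X$, i.e. $\deg\phi^*Q\le 0$. Combining, $\deg\phi^*Q=0$ for every free curve, whence $c_1(Q)=0$ and $c_1(\mcD)=c_1(T_X)$.

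The main obstacle is the input just used, namely that the positive directions of an \emph{arbitrary} free rational curve through a general point are contained in $D(x)=\mcD|_x$. For a maximally free curve this is exactly the identity $\phi^*\mcD\cong\OO(a_1)\oplus\cdots\oplus\OO(a_R)$ recorded in the proof of Theorem \ref{thm:PicOne}; the issue is to rule out a free (but not maximally free) curve acquiring a positive direction $v\notin D(x)$. I would handle this by attaching such a curve to a maximally free curve through the same general point $x$ and smoothing the nodal union (Koll\'ar, \cite{Kollar96}, Chapter II): the smoothing is again free, and its positive directions at $x$ contain both $D(x)$ and $v$, producing a free curve with more than $R$ positive summands and contradicting the maximality of $R$. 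Verifying that the smoothing genuinely inherits the positive directions of both components is the delicate point, and is where I expect to spend the most care; alternatively one can phrase the same smoothing argument as showing that the classes of maximally free curves already span $N_1(X)_\QQ$, and then invoke $c_1(Q)\cdot[\psi]=0$ directly for maximally free $\psi$.

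Finally I would derive the contradiction exactly as in Theorem \ref{thm:PicOne}. Since $X$ is Fano, $c_1(T_X)=-K_X$ is ample, so $c_1(T_X)\cdot H^{n-1}>0$ for any ample $H$. Using $c_1(\mcD)=c_1(T_X)$ and $R<n$,
\[
\mu(\mcD)=\frac{c_1(\mcD)\cdot H^{n-1}}{R}=\frac{c_1(T_X)\cdot H^{n-1}}{R}>\frac{c_1(T_X)\cdot H^{n-1}}{n}=\mu(T_X),
\]
so $T_X$ is destabilized by $\mcD$ with respect to every polarization $H$. This contradicts the semistability of $T_X$, and therefore $X$ must be separably rationally connected.
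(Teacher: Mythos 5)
Your overall architecture (construct $\mcD$, prove $c_1(\mcD)=c_1(T_X)$ numerically by pairing against curve classes, then contradict semistability using $R<n$ and ampleness of $-K_X$) is the paper's, and your endgame is correct. But the primary route you propose for the key identity contains a genuine error: from $P\subset\phi^*\mcD$ you cannot conclude $\deg\phi^*\mcD\ge\deg P$; the inclusion gives exactly the opposite bound. Write $\phi^*T_X\cong P\oplus\OO^{\oplus(n-k)}$ with $k=\operatorname{rank}P$. Then $\phi^*\mcD/P$ is a subsheaf of $\phi^*T_X/P\cong\OO^{\oplus(n-k)}$, and every subsheaf of a trivial bundle on $\PP^1$ has degree $\le 0$ (a nonzero map $\OO(b)\to\OO$ forces $b\le 0$). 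Hence $\deg\phi^*\mcD\le\deg P$, i.e.\ you recover $\deg\phi^*Q\ge 0$ a second time, not $\deg\phi^*Q\le 0$. The problem is precisely the non-maximally-free curves: there $\operatorname{rank}\phi^*\mcD=R$ exceeds $k$, and the excess rank-$(R-k)$ piece $\phi^*\mcD/P$ sits inside the trivial quotient with no a priori lower bound on its degree --- the distribution $\mcD$ can twist negatively along such a curve. (A posteriori, once $c_1(Q)$ is known to be numerically trivial, that degree is $0$; but invoking this is circular for your route, which needs the vanishing curve by curve in order to establish numerical triviality.)

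The fix is exactly the alternative you mention at the end of your third paragraph, and it is the paper's own argument: use the attach-and-smooth construction not to control arbitrary free curves but to show that the classes of maximally free curves already span $N_1(X)_\QQ$. For a free curve $C$ and a maximally free curve $M$ through a common general point, the smoothing of $C\cup M$ is free and inherits the $R$ positive directions of $M$, hence is maximally free of class $[C]+[M]$; so $[C]=([C]+[M])-[M]$ lies in the span of maximally free classes, and the hypothesis that free classes generate $N_1(X)_\QQ$ transfers to maximally free classes. Then evaluate $c_1(Q)$ only on maximally free curves $\psi$, where the rank mismatch disappears: by the result of Shen quoted in the proof of Theorem \ref{thm:PicOne}, $\psi^*\mcD\cong\OO(a_1)\oplus\cdots\oplus\OO(a_R)$ is exactly the positive part, of the same rank $R$ as $\mcD$, so $\deg\psi^*Q=0$ on the nose, and numerical triviality of $c_1(Q)$ follows from the perfect pairing. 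Note that this costs you nothing extra: the delicate smoothing statement you flagged (that a smoothing inherits the positive directions of both components) is needed for your route as well, since it is what justified $P\subset\phi^*\mcD$ in the first place. With that replacement, your remaining steps --- deforming to avoid the non-locally-free loci, $R<n$ from the absence of very free curves, and the slope contradiction --- are correct and agree with the paper.
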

Note that the classes of maximally free rational curves span $N_1(X)_\QQ$ by a simple deformation argument. Similar as in the Picard number one case, one can then conclude the equality of the first Chern class of $\mathcal{D}$ and $T_X$, at least numerically, by evaluating them on the maximally free curves. The Fano condition is also important to get the slope inequality in the desired form.

\begin{rem}
The conditions imply that $X$ is FRC by the quotient construction (Theorem 4.13, Chap. IV,\cite{Kollar96}, see also the proof of Corollary 4.14). But they are too restrictive. For example it suffices to assume that there is a contraction of $X$ which contracts all the divisors which do not intersect the free curves. 
\end{rem}

\begin{ex}
There are rationally connected (even FRC), separably uniruled smooth projective varieties whose group of rational one cycles modulo numerical equivalence $N_1(X)_\QQ$ is not generated by free rational curves. Indeed, take one of Koll\'ar's examples (Exercise 5.19, Chap. V, \cite{Kollar96}). And let $X$ be a resolution of singularities, which exists by the local description of the singularities. Finally let $Y$ be the blow-up of $X$ along a smooth point. Then the intersection number of the exceptional divisor $E$ with any free rational curve on $Y$ is $0$. Otherwise one can construct very free rational curves on $X$ from a free rational curve on $Y$ which has positive intersection number with the exceptional divisor.
\end{ex}

\textbf{Acknowledgment:} The idea of the paper comes from a lecture on foliations in the summer school ``rational points, rational curves, and entire holomorphic curves on projective varieties". I would like to thank the organizers for their hard work and the all the lecturers in the summer school for their enlightening lectures. Finally, I would like to thank Prof. Olivier Debarre for suggesting the reference \cite{Olivier}.

This paper is dedicated to my dearest friend, Neipu, for his accompany in the time of happiness and in the time of sorrow, and for his strong belief in wait and hope. May he rest in peace.
\bibliographystyle{alpha}
\bibliography{MyBib}

\end{document}